\newtheorem{theorem}{Theorem}[section]
\newtheorem{lemma}[theorem]{Lemma}
\definecolor{e-mail}{rgb}{0,.40,.80}
\definecolor{reference}{rgb}{.20,.60,.22}
\definecolor{citation}{rgb}{0,.40,.80}
\theoremstyle{definition}
\newtheorem{definition}[theorem]{Definition}
\theoremstyle{remark}
\numberwithin{equation}{section}
\def \l {\langle}
\def \r {\rangle}
\title[Corrigendum: Efffective differential Nullstellensatz]{Corrigendum to the paper:\\
``On bounds for the effective differential Nullstellensatz'' \\
J. of Algebra {\bf 449} (2016) 1--21.}
\author{Omar Le\'on S\'anchez} 
\email{oleonsan@math.mcmaster.ca} 
\address{School of Mathematics, University of Manchester,
Oxford Road,
Manchester, UK, M13 9PL}
\author{Alexey Ovchinnikov}
\thanks{A. Ovchinnikov was partially supported by the NSF grants CCF-0952591, CCF-1563942, and DMS-1413859.}
\email{aovchinnikov@qc.cuny.edu}
\address{Department of Mathematics,
CUNY Queens College,
65-30 Kissena Blvd,
Queens, NY 11367, USA\\
Ph.D. Program in Mathematics, CUNY Graduate Center,
365 Fifth Avenue, New York, New York  10016, USA}
\date{\today}
\begin{document}

\maketitle

\begin{abstract}
We correct a small gap found in the paper. This gap is due to an inequality that does not generally hold. However, under one additional assumption, it does hold. In this note, we provide a detailed proof of this. We then point out that this assumption is satisfied in all instances in which the inequality was used.
\end{abstract}

\

In the proof of Lemma 3.12 we used inequality (3.5). This inequality says that for any $a_1,a_2,b_1,b_2$, nonnegative integers with $a_i\leq b_1$ for $i=1,2$, then, given a positive integer $d$, we have
$$a_1+a_2\leq b_1+ b_2 \implies a_1^{\l d\r}+a_2^{\l d\r}\leq b_1^{\l d\r}+b_2^{\l d\r}$$
However, this does not generally hold\footnote{We are grateful to William Sit for bringing this to our attention.}. For instance, let $a_1=4$, $a_2=4$, $b_1=6$, $b_2=2$, and $d=3$, then $4^{\l 3\r}=5$, $6^{\l 3\r}=7$ and $2^{\l 3\r}=2$; and so $a_1^{\l d\r}+a_2^{\l d\r}=10$ while $b_1^{\l d\r}+b_2^{\l d\r}=9$. This (counter-)example also applies to Lemma 3.12, so this lemma is incorrect as stated.

The purpose of this note is to fix these incorrect statements by including an additional assumption; namely, $b_1$ above should be of the form $\binom{m-1+d}{d}$ for some $m$. We note that, at those places at which (3.5) and Lemma 3.12 were used, this assumption is satisfied, and so all the other statements of the paper remain valid.

Let us briefly recall some of the terminology. Given two nonnegative integers $a$ and $b$, with $a\geq b$, we set
$$\binom{a}{b}=\frac{a!}{(a-b)!\cdot b!}$$
In case $a<b$, we set $\binom{a}{b}=0$. 

We recall that, for $c$ and $d$ positive integers, we have the identity
\begin{equation}\label{bin}
\binom{c+d}{d}=\binom{c-1+d}{d}+\binom{c-1+d}{d-1}.
\end{equation}
By \cite[Lemma~4.2.6]{BH98}, for all positive integers $a$ and $d$, there exist unique $k_d>k_{d-1}>\ldots >k_j\geq j\geq 1$ such that  
\begin{equation}\label{rep2}
a=\binom{k_d}{d}+\binom{k_{d-1}}{d-1}+\ldots+\binom{k_j}{j},
\end{equation}
where $k_d$ is the largest integer such that $a\geq \binom{k_d}{d}$.
\begin{definition}
We call the $k_d,\dots,k_j$ the (Macaulay) $d$-binomial representation of $a$. 
\end{definition}

Macaulay's function $*^{\l d \r}:\mathbb N\to \mathbb N$ is defined as
$$a^{\l d\r}=\binom{k_d+1}{d+1}+\binom{k_{d-1}+1}{d-1+1}+\ldots+\binom{k_j+1}{j+1},$$
where $k_d,\dots,k_j$ is the $d$-binomial representation of $a$. We set $0^{\l d\r}=0$. 

We have: 
\begin{equation}\label{ineq}
a< b \implies  a^{\l d\r}<b^{\l d\r} 
\end{equation}
Indeed, by \cite[Lemma~4.2.7]{BH98} 
$$ a< b \quad\iff\quad (k_d,\dots,k_j)<_{\operatorname{lex}}(k'_d,\dots,k'_i),$$
where $<_{\operatorname{lex}}$ is the (left) lexicographic order and $(k_d,\dots,k_j)$ (respectively, $(k'_d,\dots,k'_i)$) is the $d$-binomial representation of $a$ (respectively, $b$).

By  \eqref{bin} and \eqref{rep2},  for all $a>0$ and $d>0$, there exists a unique  $c>0$ and there exists $A$ such that 
\begin{equation}\label{form1}
a=\binom{c-1+d}{d}+A \quad \text{ with } 0\leq A < \binom{c-1+d}{d-1}.
\end{equation}
Indeed,  let $c=k_d-d+1$. If $c$ were not unique, we would have $c'\neq c$ and $A'$ such that $a=\binom{c'-1+d}{d}+A'$ with $0\leq A'<\binom{c'-1+d}{d-1}$, but then, writing the $(d-1)$-binomial representations of $A$ and $A'$, we would obtain two distinct $d$-binomial representations of $a$; however, we know such representation is unique and \[ A=\binom{k_{d-1}}{d-1}+\ldots+\binom{k_j}{j},\] where $(k_d,\dots,k_j)$ is the $d$-binomial representation of $a$. Since $k_d=c-1+d$, then  $A< \binom{k_d}{d-1}=\binom{c-1+d}{d-1}$ as desired. Indeed, otherwise, for $B:=A-\binom{k_d}{d-1}$, we get \[a= \binom{k_d}{d}+A= \binom{k_d}{d}+\binom{k_d}{d-1}+B=\binom{k_d+1}{d}+B,\] where the letter equality uses \eqref{bin}, which implies $a\geq \binom{k_d+1}{d}$, and this contradicts the choice of $k_d$.

It follows from the definition that
$$a^{\l d\r}=\binom{c+d}{d+1}+A^{\l d-1\r}.$$
Moreover,  for all $a>0$ and $d>0$, there exists a unique  $c\geq 0$ such that
\begin{equation}\label{form2}
a=\binom{c-1+d}{d}+A \quad \text{ with } 0< A \leq \binom{c-1+d}{d-1}.
\end{equation}
Indeed, first, we may assume that $a=\binom{b-1+d}{d}$ for some $b>0$ (otherwise, it has the form as in \eqref{form1}, and we have already justified uniqueness in this case). Then, by \eqref{bin}, $a=\binom{b-2+d}{d}+\binom{b-2+d}{d-1}$. From this, we see that we can set \[c=b-1\quad \text{and}\quad A=\binom{b-2+d}{d-1}.\] Let us justify that this is the only $c$ that works. Indeed, let $c'\neq c$ be such that there exists $A'$ such that  \[a=\binom{c'-1+d}{d}+A'\quad\text{and}\quad 0<A' \leq \binom{c'-1+d}{d-1}.\] If $c'> c$, then, since $c=b-1$ we get $c'-1+d>c-1+d=b-2+d$, and so $c'-1+d\geq b-1+d$ which implies \[\binom{c'-1+d}{d}\geq \binom{b-1+d}{d}.\] As $A'>0$, we get \[a=\binom{c'-1+d}{d}+A'> \binom{b-1+d}{d}=a,\] which is impossible. Finally, if $c'<c$, then 
\begin{align*}\binom{c'-1+d}{d}+A'&\leq \binom{c'-1+d}{d}+\binom{c'-1+d}{d-1}\\
&<\binom{b-2+d}{d}+\binom{b-2+d}{d-1}=\binom{b-1+d}{d}=a,
\end{align*} which contradicts the choice if $c'$.

Also, by \eqref{bin}, in this case, we get the identity
$$a^{\l d\r}=\binom{b-1+d}{d+1}+\binom{b-1+d}{d}.$$
Therefore, regardless of the form we write $a> 0$, either as in \eqref{form1} or \eqref{form2}, we get
\begin{equation}\label{use}
a^{\l d\r}=\binom{c+d}{d+1}+A^{\l d-1\r}.
\end{equation}

\begin{lemma}\label{rep}
Let $m$ and $d$ be a positive integers. If $a$ and $b$ are nonnegative integers, then
\begin{enumerate}
\item $a^{\l d\r}+b^{\l d\r}\leq (a+b)^{\l d\r} $.
\item If, additionally, $\max (a,b)\leq\binom{m-1+d}{d}$ and $c$ is a positive integer such that
\begin{equation}\label{eq:abc} a+b\leq \binom{m-1+d}{d}+c,
\end{equation} 
then 
\[a^{\l d\r}+b^{\l d\r}\leq \binom{m-1+d}{d}^{\l d\r}+c^{\l d\r}.\]
\end{enumerate}
\end{lemma}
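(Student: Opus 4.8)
The plan is to establish part (1) first and then to derive part (2) from it together with a sharper ``transfer'' inequality in which the hypothesis $\binom{m-1+d}{d}$ turns out to be essential. For part (1) I would induct on $d$ using the recursion \eqref{use}. The base case $d=1$ is immediate: $a^{\l 1\r}=\binom{a+1}{2}$ is convex in $a$ and vanishes at $0$, and a convex function with $f(0)=0$ is superadditive. For the inductive step, write $a$ and $b$ as in \eqref{form1}, say $a=\binom{\alpha-1+d}{d}+A$ and $b=\binom{\beta-1+d}{d}+B$ with $\alpha\ge\beta$, so that by \eqref{use} one has $a^{\l d\r}+b^{\l d\r}=\binom{\alpha+d}{d+1}+\binom{\beta+d}{d+1}+A^{\l d-1\r}+B^{\l d-1\r}$. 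Writing $a+b=\binom{\alpha-1+d}{d}+(A+b)$ and bounding $(a+b)^{\l d\r}$ below by \eqref{ineq} and \eqref{use}, the claim reduces to inequalities at level $d-1$; applying the induction hypothesis first to $A+b$ and then to $b$ leaves only the elementary comparison $\binom{\beta-1+d}{d}^{\l d-1\r}\ge\binom{\beta+d}{d+1}$. The one real complication is the case where $A+b$ exceeds $\binom{\alpha-1+d}{d-1}$, so that $a+b$ moves into a higher block and its representation changes shape; this I would handle by a separate, more careful lower bound on $(a+b)^{\l d\r}$ using several leading terms of its representation (equivalently, by strong induction on $a+b$).

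For part (2) set $M:=\binom{m-1+d}{d}$ and assume without loss of generality $a\ge b$. If $a+b\le M$, then part (1) together with monotonicity \eqref{ineq} gives $a^{\l d\r}+b^{\l d\r}\le (a+b)^{\l d\r}\le M^{\l d\r}\le M^{\l d\r}+c^{\l d\r}$. If instead $a+b>M$, put $b':=a+b-M$; then $0<b'\le b$ since $a\le M$, and $b'\le c$ since $a+b\le M+c$. In this case it suffices to prove the \emph{transfer inequality} $a^{\l d\r}+b^{\l d\r}\le M^{\l d\r}+(b')^{\l d\r}$, after which \eqref{ineq} and $b'\le c$ conclude.

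Substituting $a=M-L$ and $b=b'+L$ with $L:=M-a\ge 0$, the transfer inequality becomes the assertion that among all windows $[u,u+L]\subseteq[0,M]$ the increment $(u+L)^{\l d\r}-u^{\l d\r}$ is maximal for the window ending at $M$. This is exactly the point at which $M=\binom{m-1+d}{d}$ cannot be dropped: the counterexample recalled at the start of the note shows that for a general upper bound the rightmost window need not be extremal. I would prove it by induction on $d$ via \eqref{use}, using the block structure of $*^{\l d\r}$ on $[0,M]$: on the interior of a block (where the leading binomial of the argument is fixed) consecutive increments of $*^{\l d\r}$ agree with increments of $*^{\l d-1\r}$, while each block begins, at a complete binomial $\binom{\kappa-1+d}{d}$, with a distinguished larger jump. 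Since $M$ is the start of block $m$ and the block lengths $\binom{\kappa-1+d}{d-1}$ are themselves complete binomials at level $d-1$, the induction hypothesis governs the within-block contributions while the jump at $M$ produces the extremal behaviour at the right end.

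The main obstacle is this final induction. An arbitrary window may straddle several block boundaries, and comparing it with the rightmost window forces one to match the jump contributions at the intervening block starts against the level-$(d-1)$ window inequality delivered by the induction hypothesis; keeping this bookkeeping under control is the genuinely delicate step, and it is the precise spot where the counterexample would resurface if $M$ were not a complete binomial. By comparison, the reduction of part (2) to the transfer inequality, the appeals to monotonicity \eqref{ineq}, and the base case $d=1$ (again by convexity of $\binom{a+1}{2}$) are all routine.
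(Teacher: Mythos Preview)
Your preliminary reductions for part~(2) are correct and match the paper's: one may assume $a\ge b>0$, $a<M:=\binom{m-1+d}{d}$, and $a+b=M+c$, so that what remains is exactly your ``transfer inequality''. The divergence from the paper, and the source of the gaps, is in how you organise the induction.

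You separate (1) and (2), giving each its own induction on $d$, and you correctly flag the hard step in each: for (1) the overflow case $A+b\ge\binom{\alpha-1+d}{d-1}$, to be handled by ``strong induction on $a+b$''; for (2) the bookkeeping when a window straddles several block boundaries. Neither step is actually carried out, and they are not peripheral---they are the whole content of the lemma. The paper avoids both obstacles by proving (1) and (2) \emph{simultaneously} by induction on $d$, via one iterative construction. Writing $a=\binom{s-1+d}{d}+A$ in form~\eqref{form1} and $b=\binom{t-1+d}{d}+B$ in form~\eqref{form2}, one produces $(a_1,b_1)$ with $a_1+b_1=a+b$, $b_1<b\le a<a_1\le\binom{s+d}{d}$, and $a^{\langle d\rangle}+b^{\langle d\rangle}\le a_1^{\langle d\rangle}+b_1^{\langle d\rangle}$. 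If $A+B<\binom{s-1+d}{d-1}$ one sets $a_1=\binom{s-1+d}{d}+A+B$, $b_1=\binom{t-1+d}{d}$ and uses part~(1) at level $d-1$. If $A+B=\binom{s-1+d}{d-1}+e$ one sets $a_1=\binom{s+d}{d}$, $b_1=\binom{t-1+d}{d}+e$ and uses \emph{part~(2)} at level $d-1$, which applies because $\max(A,B)\le\binom{s-1+d}{d-1}$ is itself a complete binomial at level $d-1$. Iterating, the first coordinate climbs through successive complete binomials; it reaches $M$ at some stage (proving (2)) and eventually $a+b$ (proving (1)).

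The moral is that your overflow case in (1) is precisely where part~(2) at level $d-1$ is needed, and your window argument for (2) is precisely where part~(1) at level $d-1$ is needed; decoupling the two statements forces you to redo each under a different name. The paper's single transfer step $(a,b)\mapsto(a_1,b_1)$ replaces both your strong-induction patch and your window-increment bookkeeping.
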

\begin{proof}
We assume $a\geq b$. We may assume that $b>0$; indeed, if $b=0$ then, since $a\leq \binom{m-1+d}{d}$, \eqref{ineq} yields $a^{\l d\r}\leq \binom{m-1+d}{d}^{\l d\r}$ which shows (2). In (2), we may also assume that \[a<\binom{m-1+d}{d}\quad \text{and} \quad a+b= \binom{m-1+d}{d}+c.\]
To see why, note that, if $a=\binom{m-1+d}{d}$ then $b\leq c$ and so, by \eqref{ineq}, we get $b^{\l d\r}\leq c^{\l d\r}$; which shows (2). On the other hand, for the other assumption, suppose $a+b< \binom{m-1+d}{d}+c$ and let $a'$ be the largest integer such that $a'\leq \binom{m-1+d}{d}$ and $a'+b\leq \binom{m-1+d}{d}+c$. If the latter inequality is strict, we must have $a'=\binom{m-1+d}{d}$ (otherwise, $a'+1$ would contradict the maximality of $a'$); and so we again would get $b\leq c$, from which (2) follows using \eqref{ineq}. Thus, we may assume $a'+b= \binom{m-1+d}{d}+c$. Since $a\leq a'$ implies $a^{\l d\r}\leq a'^{\l d\r}$, it now suffices to prove (2) with $a'$ in place of $a$.

We proceed by induction on $d$. For the base case, $d=1$, note that 
$$a^{\l 1\r}=\frac{a(a+1)}{2}.$$ 
Thus,
$$a^{\l 1\r}+b^{\l 1\r}\leq \frac{a(a+1)+b(b+1)+2ab}{2}=\frac{(a+b)(a+b+1)}{2}=(a+b)^{\l 1\r}$$
To prove (2) in this case, write $m=a+i=b+j$ with $i,j\geq 0$, then
$$
m c=(a+i)(a-j)=a(a+i-j)-ij=ab-ij\leq ab,
$$
and so
$$a^{\l 1\r}+b^{\l 1\r}=\frac{(a+b)^2-2ab+a+b}{2}\leq \frac{(m+c)^2-2mc+m+c}{2}=m^{\l 1\r}+c^{\l 1\r}.$$

We now assume $d>1$. By \eqref{form1} and $\eqref{form2}$, we can write
$$a=\binom{s-1+d}{d}+A \quad \text{ with } 0\leq A < \binom{s-1+d}{d-1},$$
where $s>0$ and 
$$b=\binom{t-1+d}{d}+B \quad \text{ with } 0< B \leq \binom{t-1+d}{d-1},$$
where $t\geq 0$. We will construct new integers $a_1$ and $b_1$ such that
\begin{align}\label{one}
0\leq b_1< b\leq a&<a_1\leq \binom{s+d}{d},\\
\label{two}
a+b&=a_1+b_1,\\
\label{three}
a^{\l d\r}+b^{\l d\r}&\leq a_1^{\l d\r}+b_1^{\l d\r}.
\end{align}
We consider two cases:

$$\text{\bf Case 1:}\quad A+B<\binom{s-1+d}{d-1}.$$ In this case, we set 
$$a_1=\binom{s-1+d}{d}+A+B\quad \text{ and }\quad b_1=\binom{t-1+d}{d}.$$
Clearly \eqref{one} and \eqref{two} are satisfied. By \eqref{use}, we have
$$a^{\l d\r}=\binom{s+d}{d+1}+A^{\l d-1\r}\quad \text{ and }\quad b^{\l d\r}=\binom{t+d}{d+1}+B^{\l d-1\r}$$
and 
$$a_1^{\l d\r}=\binom{s+d}{d+1}+(A+B)^{\l d-1\r}\quad \text{ and }\quad b_1^{\l d\r}=\binom{t+d}{d+1}.$$
Since by induction $A^{\l d-1\r}+B^{\l d-1\r}\leq (A+B)^{\l d-1\r}$, from the above equalities, we get
\eqref{three}.

$$\text{\bf Case 2:}\quad A+B=\binom{s-1+d}{d-1}+e$$ for some $e\geq 0$. In this case, we set
$$a_1=\binom{s+d}{d} \quad \text{ and }\quad b_1=\binom{t-1+d}{d}+e.$$
We clearly have \eqref{one}. By \eqref{bin}, we have 
$$a_1+b_1=\binom{s+d-1}{d}+\binom{s+d-1}{d-1}+b+A-\binom{s-1+d}{d-1}=a+b.$$
This yields \eqref{two}. Now, since $b\leq a$, we have $t\leq s$, and so $B< \binom{s-1+d}{d-1}$. By induction, we have
\begin{equation}\label{eq:ABd1}
A^{\l d-1\r}+B^{\l d-1\r}\leq \binom{s-1+d}{d-1}^{\l d-1\r}+e^{\l d-1\r}.
\end{equation}
On the other hand, by~\eqref{rep2} and~\eqref{use}, we have
\begin{equation}\label{eq:a1b1}
a_1^{\l d\r}=\binom{s+d+1}{d+1} \quad \text{ and }\quad  b_1^{\l d\r}=\binom{t+d}{d+1}+e^{\l d-1\r},
\end{equation}
where, for the latter equality, we have  used the fact that $0\leq e<\binom{t-1+d}{d-1}$. Formulas~\eqref{eq:ABd1},~\eqref{eq:a1b1}, ~\eqref{bin} and~\eqref{use} yield
\begin{align*}
a^{\l d\r}+b^{\l d\r} & =\binom{s+d+1}{d+1}-\binom{s+d}{d}+\binom{t+d}{d+1}+A^{\l d-1\r}+B^{\l d-1\r} \\
&\leq a_1^{\l d\r} -\binom{s+d}{d}+b_1^{\l d\r} +\binom{s-1+d}{d-1}^{\l d-1\r} \\
& =a_1^{\l d\r}+b_1^{\l d\r},
\end{align*}
and hence we obtain \eqref{three}. 

Iterating this construction, obtaining $(a_{i+1},b_{i+1})$ from $(a_i,b_i)$ satisfying \eqref{one} to  \eqref{three}, one eventually finds $\ell_1\leq \ell_2$ such that $a_{\ell_1}=\binom{m-1+d}{d}$ (which implies $b_{\ell_1}=c$) and $a_{\ell_2}=a+b$ (which implies $b_{\ell_2}=0$). This shows that
$$a^{\l d\r}+b^{\l d\r}\leq a_{\ell_1}^{\l d\r}+b_{\ell_1}^{\l d\r}=\binom{m-1+d}{d}^{\l d\r}+c^{\l d\r}$$
 and
\[a^{\l d\r}+b^{\l d\r}\leq a_{\ell_2}^{\l d\r}+b_{\ell_2}^{\l d\r}=(a+b)^{\l d\r}.\qedhere\]
\end{proof}

\begin{lemma}\label{technical}
Let $m$ and $d$ be positive integers. Suppose $a_1\leq \cdots \leq a_t$ and $b_1, \dots, b_s$ are sequences of nonnegative integers such that 
$$b_1\leq b_2= \cdots =b_s=\binom{m-1+d}{d}$$
and $b_s\geq a_i$ for all $i\leq t$. If $a_1+\cdots+a_t\leq b_1+\cdots+b_s$, then
$$a_1^{\langle d\rangle}+\cdots+a_t^{\langle d \rangle}\leq b_1^{\langle d\rangle}+\cdots+b_s^{\langle d\rangle}.$$
\end{lemma}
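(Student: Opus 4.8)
The plan is to reduce the multi-term inequality to the two-term estimates of Lemma~\ref{rep} by first transforming the left-hand sequence into a \emph{normal form}, and then comparing that normal form directly with the right-hand side. Throughout I write $M=\binom{m-1+d}{d}$, so every $a_i\le M$ and $b_1\le b_2=\cdots=b_s=M$. Set $S=a_1+\cdots+a_t$ and write $S=qM+r$ with $0\le r<M$. The first goal is to prove
\[
a_1^{\l d\r}+\cdots+a_t^{\l d\r}\le q\,M^{\l d\r}+r^{\l d\r},
\]
i.e.\ that among all ways of writing $S$ as a sum of nonnegative integers each at most $M$, the Macaulay sum is largest for the greedy packing $(M,\dots,M,r)$ using $q$ copies of $M$.

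To establish this I would use a rebalancing procedure. Pick any two entries $a_i\le a_j$ that are both nonzero and strictly less than $M$. If $a_i+a_j\le M$, replace the pair by $(a_i+a_j,0)$; by part~(1) of Lemma~\ref{rep} (with $0^{\l d\r}=0$) this does not decrease the Macaulay sum. If instead $a_i+a_j>M$, replace the pair by $\bigl(M,\,a_i+a_j-M\bigr)$; here $\max(a_i,a_j)\le M$ and $a_i+a_j=M+(a_i+a_j-M)$, so part~(2) of Lemma~\ref{rep} applies and the Macaulay sum again does not decrease, while $a_i+a_j-M<M$ keeps every entry in the range $[0,M]$. Each move preserves $S$ and strictly lowers the number of entries lying strictly between $0$ and $M$, so the procedure terminates in a sequence with at most one such entry, which is precisely $(M,\dots,M,r)$ together with zeros. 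This yields the displayed bound.

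It then remains to show $q\,M^{\l d\r}+r^{\l d\r}\le b_1^{\l d\r}+(s-1)M^{\l d\r}$. From $S\le b_1+\cdots+b_s=b_1+(s-1)M\le sM$ and $0\le r<M$ we get $q\le s$, and I would split into three cases, each using only the monotonicity \eqref{ineq}. If $q\le s-2$, then $r^{\l d\r}<M^{\l d\r}$ is absorbed by a spare factor $M^{\l d\r}$ on the right. If $q=s-1$, the inequality $S\le b_1+(s-1)M$ forces $r\le b_1$, whence $r^{\l d\r}\le b_1^{\l d\r}$. If $q=s$, then $r=0$ and $S=sM\le b_1+(s-1)M$ forces $b_1=M$, so the desired inequality is in fact an equality.

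The main obstacle is the normal-form step, and it is exactly where the original inequality (3.5) broke down: since $*^{\l d\r}$ is superadditive rather than concave, mass cannot be shuffled freely. What rescues the argument is that every rebalancing move either merges two small entries or pushes one entry up to the \emph{specific} value $M=\binom{m-1+d}{d}$, which is precisely the admissible value for which part~(2) of Lemma~\ref{rep} was proved. Guaranteeing that the target is always this $M$, rather than an arbitrary cap, is the delicate point, and it is what ties the reduction to the structural hypothesis on the $b_j$.
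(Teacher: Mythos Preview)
Your argument is correct, and it follows a genuinely different route from the paper's.

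The paper argues by induction on $(t,s)$ in the lexicographic order: it peels off $a_1$ against $b_1$ when $b_1\ge a_1$, and otherwise rearranges the $b$-side using Lemma~\ref{rep}(2) so that the induction applies to shorter sequences. Your proof instead first drives the $a$-sequence to the extremal ``greedy'' configuration $(M,\dots,M,r,0,\dots,0)$ by repeated pairwise moves, invoking Lemma~\ref{rep}(1) to merge two small entries and Lemma~\ref{rep}(2) to cap a pair at $M$; the termination measure (number of entries strictly in $(0,M)$) is clean and the invariant $a_i\in[0,M]$ is preserved. The second stage is then a three-case comparison using only monotonicity~\eqref{ineq}. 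This two-stage structure isolates the role of the special value $M=\binom{m-1+d}{d}$ very transparently --- it is exactly the cap for which Lemma~\ref{rep}(2) is available --- and avoids the $(t,s)$-induction entirely. The paper's proof, by contrast, keeps both sequences in play throughout and is slightly more intricate in its case analysis, though it has the minor advantage of not needing to introduce the auxiliary quantities $q$ and $r$.
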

\begin{proof}
We proceed by induction on $(t,s)$ using the lexicographic order. 
The case $t=1$ follows from~ \eqref{ineq}.
The case $s=1$ follows from~Lemma~\ref{rep}. 
Thus, we assume that $t,s>1$. We now consider two cases:
\begin{enumerate}
\item[\underline{Case 1.}] Suppose $b_1\geq a_1$. Then the sequences $a_2\leq \cdots\leq a_{t}$ and $b_1-a_1\leq b_2\leq \cdots \leq b_s$ satisfy our hypothesis. By induction, 
 $$a_2^{\langle d\rangle}+\cdots+a_{t}^{\langle d\rangle} \leq (b_1-a_1)^{\langle d\rangle}+b_{2}^{\langle d\rangle}+\cdots + b_s^{\langle d\rangle}.$$
Using that $a_1^{\langle d\rangle}+(b_1-a_1)^{\langle d\rangle}\leq b_1^{\langle d\rangle}$, which follows from Lemma~\ref{rep}, we get the desired inequality for the original sequences.
\item[\underline{Case 2.}] Suppose $b_1< a_1$. When $s=2$, we must have that $a_2+\cdots +a_{t}\leq b_2$ and so
$$a_1^{\langle d\rangle}+\cdots+a_t^{\langle d\rangle}\leq a_1^{\langle d\rangle} +(a_2+\cdots +a_{t})^{\langle d\rangle} \leq b_1^{\langle d\rangle}+b_2^{\langle d\rangle},$$
where the first inequality follows from part (1) of Lemma~\ref{rep} and the second from part (2). So we assume that $s>2$. If it happens that $a_1+\cdots+a_{t}\leq b_2+\cdots+b_{s}$, then we are done by induction. So we can assume that
\begin{equation}\label{cas}
a_2+\cdots+a_{t} > b_3+\cdots+b_{s}.
\end{equation}
We have that
$$b_{1} + b_2\geq a_1 +\left(a_2+\cdots +a_{t}-b_3-\cdots -b_{s}\right).$$
It follows from Lemma~\ref{rep}, using \eqref{cas}, that
$$b_{1}^{\langle d\rangle} +b_2^{\langle d\rangle} \geq a_1^{\langle d\rangle} + \left(a_2+\cdots +a_{t}-b_3-\cdots -b_{s}\right)^{\langle d\rangle}.$$
Thus, it suffices to see that 
$$\left(a_2+\cdots +a_{t}-b_3-\cdots -b_{s}\right)^{\langle d\rangle} + b_3^{\langle d\rangle}+\cdots + b_{s}^{\langle d\rangle}  \geq a_2^{\langle d\rangle}+\cdots +a_{t}^{\langle d\rangle},$$
but this follows by induction. Indeed, let $a_1'=a_2, \dots, a'_{t-1}=a_t$ and $b'_1=a_2+\cdots +a_{t}-b_3-\cdots -b_{s}$ and $b'_2=b_3,\dots,b'_{s-1}=b_s$. After these relabelings, the desired inequality is precisely $$a'^{\langle d\rangle}_1+\cdots+a'^{\langle d\rangle}_{t-1}\leq b'^{\langle d\rangle}_1+b'^{\langle d\rangle}_2+\cdots b'^{\langle d\rangle}_{s-1}.$$
Clearly, $a'_1\leq \cdots\leq a'_{t-1}$, $\binom{m-1+d}{d}=b'_2=\cdots=b'_s\geq a'_i$ and $a'_1+\cdots+a'_{t-1}\leq b'_1+\cdots+b'_{s-1}$. Hence, to prove the above displayed inequality using induction, it only remains to check that $b'_1\leq \binom{m-1+d}{d}$. To see this, note that, since $b_1<a_1$, we have $a_2+\cdots +a_{t}\leq b_2+\cdots +b_s$; in other words (after moving $b_3+\cdots+b_s$ to the left-hand side) $b'_1\leq b_2=\binom{m-1+d}{d}$, as desired.\qedhere
\end{enumerate}
\end{proof}

Now, one way to make the paper formally correct is to replace (3.5) with Lemma~\ref{rep} above, and replace Lemma 3.12 with Lemma \ref{technical} above; furthermore:
\begin{enumerate}
\item In line 14 of the proof of Proposition 3.8, replace the use of (3.5) with \eqref{ineq} above.
\item In the proof of Proposition 3.13, replace the use of Lemma 3.12 with Lemma~\ref{technical} above. The assumptions of Lemma \ref{technical} are indeed satisfied in Proposition 3.13 as expressed by the inequalities in line~17 of its proof. More precisely, if we set $a_k=H_{\bar a}^{i+1,k}(d)$, for $k=1,\dots,n$, and $b_k=H_{\bar\mu}^{i+1,k-1+j}(d)$, for $k=1,\dots,n+1-j$, where $j$ is such that $L_0+\cdots+L_{j-1}\leq i+1\leq L_0+\cdots+L_j$, then the inequalities in line~17 of the proof of Proposition 3.13 say  that 
$$b_1\leq b_2=\cdots=b_{n+1-j}=\binom{m-1+d}{d}.$$ 
Also, by definition $H_{\bar a}^{i+1,k}(d)\leq\binom{m-1+d}{d}$, and so  $a_k\leq b_{n+1-j}$ for all $k=1,\dots,n$. The induction hypothesis on $d$ before display (3.19) implies
that $$H_{\bar a}^{i+1,1}(d)+\cdots+H_{\bar a}^{i+1,n}(d)\leq H_{\bar \mu}^{i+1,j}(d)+\cdots+H_{\bar \mu}^{i+1,n}(d),$$ 
which translates to $a_1+\dots+a_n\leq b_1+\cdots+b_{n+1-j}$, and so Lemma~\ref{technical} yields $$a_1^{\langle d\rangle}+\dots+a_n^{\langle d\rangle}\leq b_1^{\langle d\rangle}+\cdots+b_{n+1-j}^{\langle d\rangle}.$$ This yields the inequality in display (3.19), which is the only place in the proof at which Lemma 3.12 was used.
 \end{enumerate}

\bibliographystyle{abbrvnat}
\bibliography{bibdata}

\end{document}